\newtheorem{theorem}{Theorem}
\newtheorem{lemma}{Lemma}
\newtheorem{claim}{Claim}[section]
\newtheorem{corollary}{Corollary}
\theoremstyle{definition}
\theoremstyle{remark}
\numberwithin{equation}{section}
\begin{document}

\title{Ramsey numbers of trees versus odd cycles}

\author{Matthew Brennan}
\address{Massachusetts Institute of Technology, Cambridge, MA 02139}
\email{brennanm@mit.edu}

\keywords{Ramsey number, trees, odd cycles}

\maketitle

\begin{abstract}
Burr, Erd\H{o}s, Faudree, Rousseau and Schelp initiated the study of Ramsey numbers of trees versus odd cycles, proving that $R(T_n, C_m) = 2n - 1$ for all odd $m \ge 3$ and $n \ge 756m^{10}$, where $T_n$ is a tree with $n$ vertices and $C_m$ is an odd cycle of length $m$. They proposed to study the minimum positive integer $n_0(m)$ such that this result holds for all $n \ge n_0(m)$, as a function of $m$. In this paper, we show that $n_0(m)$ is at most linear. In particular, we prove that $R(T_n, C_m) = 2n - 1$ for all odd $m \ge 3$ and $n \ge 25m$. Combining this with a result of Faudree, Lawrence, Parsons and Schelp yields $n_0(m)$ is bounded between two linear functions, thus identifying $n_0(m)$ up to a constant factor.
\end{abstract}

\section{Introduction}

The generalized Ramsey number $R(H, K)$ is the smallest positive integer $N$ such for any graph $G$ with at least $N$ vertices either $G$ contains $H$ as a subgraph or its complement $\overline{G}$ contains $K$ as a subgraph, where $H$ and $K$ are any two given graphs. When $H$ and $K$ are complete graphs with $m$ and $n$ vertices respectively, $R(H, K)$ is the classical Ramsey number $R(m,n)$. Classical Ramsey numbers are notoriously difficult to determine. The exact values of many small classical Ramsey numbers including $R(5, 5)$ remain unknown. Because of this, Chv\'{a}tal and Harary proposed to study generalized Ramsey numbers of graphs that are not complete in a series of papers in the early 1970's \cite{chvatal1972generalized1, chvatal1972generalized2, chvatal1973generalized}. 

Generalized Ramsey numbers have since been well studied for a variety of graphs, including trees and odd cycles. Let $T_n$ be a tree with $n$ vertices and $C_m$ denote a cycle of length $m$. Bondy and Erd\H{o}s showed that $R(C_n, C_n) = 2n - 1$ for odd $n$ and $R(C_n, C_{2r-1}) = 2n - 1$ if $n>r(2r-1)$ \cite{bondy1973ramsey}. Chv\'{a}tal identified the Ramsey numbers of trees versus complete graphs, showing that $R(T_n, K_m) = (n - 1)(m - 1) + 1$ for all positive integers $m$ and $n$ \cite{chvatal1977tree}. Faudree, Lawrence, Parsons and Schelp identified the Ramsey numbers of paths versus odd cycles. If $P_n$ denotes a path on $n$ vertices, they showed that $R(P_n, C_m) = 2n - 1$ for $n \ge m \ge 3$ and $R(P_n, C_m) = \max \{ 2n - 1, m + \lfloor n/2 \rfloor - 1 \}$ for $m \ge n \ge 2$ where $m$ is odd \cite{faudree1974path}. Faudree, Schelp and Simonovits showed several bounds and exact results on the Ramsey numbers $R(T_n, C_{\ge m})$ where $C_{\ge m}$ denotes the family of cycles of length at least $m$ in \cite{faudree1990some}. These results include that $R(T_n, C_{\ge m}) \le 2m + 2n - 7$ for all $m,n \ge 3$, $R(T_n, C_{\ge m}) \le m + n - 2$ if either $m \ge n$ or $n \ge 432 m^6 - m^2$, and $R(T_n, C_{\ge m}) = n + \lfloor m/2 \rfloor - 1$ if $T_n$ is a tree with maximum degree less than $n - 3m^2$ and $n \ge 432 m^6$. A survey of results about generalized Ramsey numbers can be found in \cite{radziszowski1994small}.

There have also been lower bounds shown to hold for generalized Ramsey numbers of all graphs. In 1981, Burr showed a lower bound for $R(H,K)$ in terms of the chromatic number $\chi(K)$ of a graph $K$ and its chromatic surplus $s(K)$ -- the minimum number of vertices in a color class over all proper vertex colorings of $K$ using $\chi(K)$ colors.

\begin{theorem}[Burr \cite{burr1981ramsey}]
If $s(K)$ is the chromatic surplus of the graph $K$, then for all connected graphs $H$ with $n \ge s(K)$ vertices we have
$$R(H,K) \ge (n-1)(\chi(K) - 1) + s(K).$$
\end{theorem}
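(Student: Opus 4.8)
The plan is to establish the lower bound by constructing, for $N := (n-1)(\chi(K)-1) + s(K)$, a graph $G$ on $N-1$ vertices such that $G$ contains no copy of $H$ and $\overline{G}$ contains no copy of $K$; producing such a $G$ shows that $N-1$ fails the defining property of the Ramsey number, hence $R(H,K) \ge N$. Writing $\chi = \chi(K)$ and $s = s(K)$, the candidate I would take is the disjoint union $G$ of $\chi - 1$ cliques each isomorphic to $K_{n-1}$ together with one further clique isomorphic to $K_{s-1}$, so that $|V(G)| = (n-1)(\chi-1) + (s-1) = N-1$, exactly as needed.

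Checking that $H \not\subseteq G$ is immediate: $H$ is connected on $n$ vertices, so any copy of $H$ in $G$ would lie inside a single connected component of $G$; but each component of $G$ is a clique on at most $n-1$ vertices, using the hypothesis $n \ge s$ to bound the exceptional component of size $s - 1 \le n - 1$. Hence $H$ cannot embed into $G$.

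The substantive part is showing $K \not\subseteq \overline{G}$. Here $\overline{G}$ is the complete multipartite graph whose parts are precisely the vertex sets of the $\chi$ cliques forming $G$, i.e.\ $\chi - 1$ parts of size $n - 1$ and one part of size $s - 1$. Suppose $K$ embedded into $\overline{G}$; I would color each vertex of $K$ by the part of $\overline{G}$ it occupies. Vertices sharing a part are nonadjacent in $\overline{G}$, hence nonadjacent in $K$, so this is a proper coloring of $K$ with at most $\chi$ colors; by the definition of $\chi(K)$ it must use at least $\chi$ colors and therefore exactly $\chi$, so every part of $\overline{G}$ contains at least one vertex of $K$ and we have a bona fide proper $\chi$-coloring of $K$. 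By the definition of the chromatic surplus $s(K)$, every color class of this coloring has at least $s$ vertices, contradicting the fact that the class coming from the size-$(s-1)$ part has at most $s - 1$ vertices. This contradiction gives $K \not\subseteq \overline{G}$ and completes the argument.

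I expect the only delicate point to be the final coloring step — in particular, confirming that the part-coloring really is a proper coloring realized on all $\chi$ colors, so that the chromatic-surplus inequality is applicable, and then pairing the undersized part of $G$ against that inequality. Aside from this, a handful of degenerate situations (such as $s(K) = 1$, $\chi(K) = 1$, or very small $n$) should be checked separately, but the same construction works in each.
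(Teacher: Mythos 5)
Your construction (the disjoint union of $\chi(K)-1$ copies of $K_{n-1}$ with a $K_{s-1}$, whose complement is complete multipartite with an undersized part that violates the chromatic-surplus bound) is correct and is the standard proof of Burr's lower bound; the paper cites the theorem without proof, but its remark about the extremal graph $K_{n-1,n-1}$ for $K = C_m$ is exactly the specialization of your construction to $\chi = 3$, $s = 1$. No gaps.
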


In the case of several of the Ramsey numbers mentioned above, Burr's lower bound is tight. For $K = C_m$, Burr's lower bound yields that $R(H, C_m) \ge 2n - 1$ where $n = |V(H)|$ since $\chi(C_m) = 3$ and $s(C_m) = 1$. In 1982, Burr, Erd\H{o}s, Faudree, Rousseau and Schelp showed that for sufficiently large $n$ and small $\epsilon$, Burr's lower bound on the Ramsey numbers of sparse connected graphs $G$ with at most $(1 + \epsilon)n$ edges versus odd cycles $C_m$ is tight. Specifically, they proved the following theorem.

\begin{theorem}[Burr et al. \cite{burr1982ramsey}]
If $G$ is a connected graph with $n$ vertices and at most $n(1 + 1/42m^5)$ edges where $m \ge 3$ is odd and $n \ge 756m^{10}$, then $R(G, C_m) = 2n - 1$.
\end{theorem}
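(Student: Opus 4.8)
The plan is to split the statement into the easy lower bound and the substantial upper bound. For the lower bound, $R(G, C_m) \ge 2n-1$ is immediate from Burr's theorem in the form quoted above: $G$ is connected with $n \ge 1 = s(C_m)$ vertices, and $\chi(C_m) = 3$, $s(C_m) = 1$, so $R(G, C_m) \ge (n-1)(3-1) + 1 = 2n-1$. (Concretely, color $K_{2n-2}$ so that the blue graph is $K_{n-1,n-1}$; then blue is bipartite, hence $C_m$-free, while red is $2K_{n-1}$, which is disconnected and so cannot contain the connected graph $G$.)

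For the upper bound I would take an arbitrary red/blue coloring of $K_N$ with $N = 2n-1$, write $B$ and $R$ for the blue and red graphs, assume $B$ has no $C_m$, and embed $G$ into $R$. The first reduction is structural, and the bipartite case is disposed of immediately: if $B$ is bipartite then $V(K_N)$ splits into two blue-independent sets, and since $N = 2n-1$ the larger has at least $n$ vertices and spans a red clique $K_{\ge n}$, which contains every graph on $n$ vertices and in particular $G$. So the entire difficulty lies in the case that $B$ is non-bipartite yet still avoids $C_m$.

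The crux is a dichotomy: either $B$ is sparse in the right places, so that $R$ contains a subgraph $W$ that is a clique on at least $n$ vertices or satisfies $|W| \ge n$ and $\delta(R[W]) \ge n + \lceil n/42m^5 \rceil$; or $B$ is dense enough somewhere to force a $C_m$. The mechanism for producing the forbidden cycle is that a blue vertex $v$ of high degree has a blue neighborhood $N_B(v)$ of size greater than $n-1$, and any suitably long blue path with both endpoints in $N_B(v)$ closes up through $v$ into an odd cycle; realizing precisely length $m$ requires controlling even paths of \emph{exact} prescribed lengths, which is where Bondy--Simonovits-type bounds (more than about $N^{1+1/k}$ edges force $C_{2k}$) and careful length-counting enter. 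Converting "$B$ is dense'' into a cycle of the \emph{exact} odd length $m$, uniformly over all possible structures of $B$, is the main obstacle, and it is exactly what dictates the thresholds $n \ge 756m^{10}$ and the edge slack $1/42m^5$.

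In the surviving case I would finish with a greedy embedding of $G$ into the host $W$ along a depth-first order of a spanning tree of $G$: each vertex is matched to a red neighbor of its parent's image, and the $e(G) - (n-1) \le n/42m^5 + 1$ surplus edges of $G$ impose at each step only a few extra adjacency constraints, each satisfiable because the large co-degrees forced by the minimum-degree slack keep the set of admissible images nonempty even after excluding the fewer than $n$ vertices already used. The slack $1/42m^5$ is precisely calibrated so that this procedure never gets stuck, giving $G \hookrightarrow R$ and hence $R(G, C_m) \le 2n-1$; combined with the lower bound this yields $R(G, C_m) = 2n-1$.
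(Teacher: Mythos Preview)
The paper does not prove this theorem at all: it is quoted as a result of Burr, Erd\H{o}s, Faudree, Rousseau and Schelp with a citation to \cite{burr1982ramsey}, and no argument is given here. So there is nothing in this paper to compare your attempt against; the present paper only uses Theorem~2 as motivation and background for its own Theorem~3.

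Assessing your proposal on its own, the lower bound via Burr's theorem (equivalently, the $K_{n-1,n-1}$ coloring) is correct and complete. The upper-bound sketch, however, is a plan rather than a proof, and the two places where actual work is required are left as assertions. First, the step ``realizing precisely length $m$ requires controlling even paths of exact prescribed lengths, which is where Bondy--Simonovits-type bounds \dots\ enter'' is the heart of the matter and you have not carried it out: Bondy--Simonovits gives a $C_{2k}$, not an odd cycle of a specified length, and converting high blue density into a blue $C_m$ of the \emph{exact} odd length needs a concrete argument (this is exactly what the polynomial thresholds in the original paper pay for). Second, the greedy embedding of $G$ into $R[W]$ along a DFS order is not justified: when you place a vertex you must meet adjacency constraints to \emph{all} of its already-placed neighbors, and with up to $n/42m^5$ surplus edges the number of simultaneous constraints at a single step can be that large; your claimed minimum-degree slack $\delta(R[W]) \ge n + \lceil n/42m^5\rceil$ controls single adjacencies, not intersections of many neighborhoods, so the sentence ``the set of admissible images is nonempty'' is unproven as stated. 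Until those two steps are filled in, the upper bound remains a heuristic outline.
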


This theorem implies the following corollary identifying the Ramsey number of trees versus odd cycles for $n$ very large relative to $m$.

\begin{corollary}[Burr et al. \cite{burr1982ramsey}]
$R(T_n, C_m) = 2n - 1$ for all odd integers $m \ge 3$ and integers $n \ge 756m^{10}$.
\end{corollary}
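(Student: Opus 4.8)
The plan is to obtain this statement as an immediate consequence of the Burr et al. theorem quoted just above, specialized to the case in which the connected graph $G$ is a tree. Recall that the only hypotheses of that theorem are that $G$ be connected on $n$ vertices with at most $n(1 + 1/42m^5)$ edges, that $m \ge 3$ be odd, and that $n \ge 756m^{10}$.

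First I would observe that a tree $T_n$ on $n$ vertices is connected and has exactly $n - 1$ edges. Since $n - 1 < n \le n(1 + 1/42m^5)$, the edge-count hypothesis holds automatically for every odd $m \ge 3$; in fact trees are the sparsest connected graphs on $n$ vertices, so this is the extreme case of the theorem's hypothesis. Hence, taking $G = T_n$, the theorem yields $R(T_n, C_m) = 2n - 1$ for all odd $m \ge 3$ and all $n \ge 756m^{10}$, which is exactly the claim.

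If one preferred an argument that invokes only the upper-bound content of the Burr et al. theorem rather than its built-in equality, I would split the proof into two inequalities. For the lower bound $R(T_n, C_m) \ge 2n - 1$, apply Burr's theorem: an odd cycle satisfies $\chi(C_m) = 3$ and has chromatic surplus $s(C_m) = 1$ (a proper $3$-colouring can place a single vertex in one colour class and $2$-colour the remaining even path), and $T_n$ is connected with $n \ge 1 = s(C_m)$ vertices, so $R(T_n, C_m) \ge (n-1)(\chi(C_m) - 1) + s(C_m) = 2n - 1$. The matching upper bound $R(T_n, C_m) \le 2n - 1$ is precisely the Burr et al. theorem applied to $G = T_n$.

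I do not expect any real obstacle here: the corollary is a direct reading of the theorem, and the only thing to verify is the trivial inequality $n - 1 \le n(1 + 1/42m^5)$ for the number of edges of a tree. All of the difficulty lies in the theorem itself, not in the passage from it to the corollary.
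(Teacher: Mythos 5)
Your proposal is correct and matches the paper's (implicit) derivation exactly: the corollary follows immediately from the Burr et al. theorem because a tree $T_n$ is connected with $n-1 \le n(1 + 1/42m^5)$ edges, so all hypotheses are satisfied. The additional remarks about Burr's lower bound are consistent with the paper's discussion of $\chi(C_m)=3$ and $s(C_m)=1$, but are not needed since the quoted theorem already asserts equality.
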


Burr, Erd\H{o}s, Faudree, Rousseau and Schelp asked what the minimum positive integer $n_0(m)$ such that this result holds for all $n \ge n_0(m)$ is as a function of $m$. Their corollary shows that $n_0(m)$ is at most a tenth-degree polynomial in $m$. We provide a new approach to examining the Ramsey numbers of trees versus odd cycles and improve this bound, showing that $n_0(m)$ is at most linear in $m$. In particular, we prove the following theorem.

\begin{theorem}
$R(T_n, C_m) = 2n - 1$ for all odd integers $m \ge 3$ and integers $n \ge 25m$.
\end{theorem}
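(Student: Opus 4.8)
The lower bound $R(T_n, C_m) \ge 2n-1$ is immediate from Burr's theorem (Theorem 1), so the whole content is the upper bound: any 2-coloring of $K_{2n-1}$ — equivalently, any graph $G$ on $2n-1$ vertices — either contains $T_n$ or has $\overline G$ containing $C_m$. I would argue by contradiction: assume $\overline G$ is $C_m$-free and show $G$ must contain every tree on $n$ vertices. The standard tool for embedding trees is that a graph with minimum degree at least $n-1$ contains every $T_n$ (greedily embed the tree vertex by vertex in any order extending a BFS/DFS order of $T_n$). So the real goal is to find a large subset $U \subseteq V(G)$, with $|U| \ge n$, such that $G[U]$ has minimum degree at least $|U| - 1 \ge n-1$... but that is too much to hope for directly; instead one wants a slightly more flexible embedding lemma. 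I expect the right statement to be: if $G$ has $2n-1$ vertices and does not contain $T_n$, then $G$ has a vertex of low degree, and moreover there is a substantial set of vertices of degree roughly at most $n-2$; feeding this back into the $C_m$-free condition on $\overline G$ should yield a contradiction.

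**The structure of $C_m$-free graphs.** The key external input should be a structural/density statement about $C_m$-free graphs for $m$ odd: a graph on $N$ vertices with no odd cycle of length exactly $m$ is, in a precise sense, close to bipartite or has few edges, once $N$ is large compared to $m$ (this is in the spirit of Bondy–Erdős and the Burr et al. argument). Concretely I would try to show: if $\overline G$ on $2n-1$ vertices contains no $C_m$, then $V(G)$ can be partitioned (or almost-partitioned) as $A \cup B$ where $\overline G[A, B]$ is "almost complete bipartite" — equivalently $G$ contains almost all edges inside $A$ and inside $B$ — with the exceptional/error terms controlled linearly in $m$. Since $|A| + |B| = 2n-1$, one of them, say $A$, has at least $n$ vertices. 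Inside $A$, $G$ is nearly complete, so $G[A]$ has minimum degree close to $|A|-1 \ge n-1$, with only $O(m)$ deficient vertices. The $50m$ slack in the hypothesis $n \ge 50m$ is exactly what absorbs these $O(m)$-sized error terms: discard the few bad vertices, keeping a clean set $U$ of size $\ge n$ with $G[U]$ of minimum degree $\ge |U|-1$, and embed $T_n$ there.

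**Handling the non-bipartite-like case.** The above works when $\overline G$ is close to bipartite. The complementary case is when $\overline G$ is genuinely far from bipartite yet still $C_m$-free; for $m$ odd this forces $\overline G$ to have essentially no short odd structure "spread out," and a Bondy-type argument shows $\overline G$ then has few edges overall or a small vertex cover of its odd cycles. Translating back, $G$ is then dense (its complement is sparse) and I can again find a min-degree-$(n-1)$ set of size $\ge n$ directly by a counting/greedy argument: a graph on $2n-1$ vertices missing only $o(n)$ — or $O(mn)$ but concentrated — edges has such a set after deleting few vertices. I would organize this as: (i) an embedding lemma "min degree $\ge n-1$ on $\ge n$ vertices $\Rightarrow$ contains $T_n$"; (ii) if $G$ has no such clean set, extract quantitative sparseness of $G$ in a localized region; (iii) show that this sparseness, placed in $\overline G$, creates an odd cycle of every length in a long interval including $m$, using that $\overline G$ then contains a long path together with enough chords — this is essentially the pancyclicity-type / Bondy argument, and it is where the oddness of $m$ and the length bound $m \ge 3$ enter.

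**Main obstacle.** The hardest part will be step (iii) and the precise form of the structural dichotomy: proving that a $C_m$-free graph on $\Theta(n)$ vertices with $n \ge 50m$ really does decompose into two nearly-$G$-complete parts with only $O(m)$ errors, with constants good enough to beat the $50m$ threshold. Getting a clean odd cycle of the exact length $m$ (not just "some odd length" or "length at least $m$") from a long path with few chords requires care — one typically builds a path of length $\ge m$ and then shortens it by $2$ at a time using a chord, which needs a chord in the right position; ensuring such a chord exists is the crux. Managing all the linear-in-$m$ error terms so that $50m$ suffices (rather than some larger constant) will require being economical at each step, but I do not expect any genuinely new idea beyond careful bookkeeping once the dichotomy and the path-shortening lemma are in place.
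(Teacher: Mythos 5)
There is a genuine gap: your plan outsources the entire difficulty to a structural dichotomy (``a $C_m$-free graph on $\approx 100m$ vertices is either close to complete bipartite or sparse/localized'') and to a path-shortening argument producing a cycle of length exactly $m$, and neither is supplied nor is the second one workable as stated. The specific failure point is your step (iii): a long path with many chords does \emph{not} yield an odd cycle of length $m$ --- the complete bipartite graph $K_{n-1,n-1}$ has Hamiltonian paths and a quadratic number of chords yet contains no odd cycle at all, and near-bipartite graphs are exactly the extremal configurations here. So in the ``non-bipartite-like'' case you cannot close the argument by pancyclicity; any correct proof must instead show that a putative counterexample \emph{is} structured like $K_{n-1,n-1}$ and then derive the contradiction from the tree side (using that $T_n$ is bipartite with a part of size $\ge n/2$), which your plan only does in the other branch of the dichotomy. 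Moreover, the dichotomy itself, with error terms small enough to survive at $N=2n-1\approx 100m$, is a nontrivial stability statement that you acknowledge but do not prove; known Tur\'an-stability results for $C_{2k+1}$ do not hand you two parts with only $O(m)$ exceptional non-edges per vertex for free at this scale.

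For comparison, the paper avoids any general structure theorem for $C_m$-free graphs and instead bootstraps between the two hypotheses: the absence of $T_n$ in one graph forces a vertex of degree $\ge n$ in the other; the absence of $C_m$ forces that vertex's neighborhood to span fewer than $(m-2)\deg$ edges (else a path on $m-1$ neighbors closes a $C_m$ through the vertex), yielding a large almost-independent set $S_1$; a greedy tree-embedding failure produces a second high-degree vertex into $V(G)\setminus S_1$ and hence a second set $S_2$; a tree-splitting lemma forces almost all $S_1$--$S_2$ edges to be present; alternating $S_1$--$S_2$ paths then show $S_1$ and $S_2$ are independent and that outside vertices attach to at most one of them; and finally the bipartition of $T_n$ embeds it into the complement of the resulting near-$K_{n-1,n-1}$ structure. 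Every cycle of length exactly $m$ that the paper needs is built explicitly by alternating between two sets with pairwise common neighborhoods of size $>(m-1)/2$, which is what replaces your path-shortening step. Your proposal would need an analogue of this explicit construction, plus a proof of the dichotomy, before it could be considered a proof.
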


The result of Faudree, Lawrence, Parsons and Schelp that $R(P_n, C_m) = \max \{ 2n - 1, m + \lfloor n/2 \rfloor - 1 \}$ for $m \ge n \ge 2$ where $m$ is odd shows $n_0(m) \ge 2m/3 - 1$ \cite{faudree1974path}. Combining this with Theorem 3 yields that $n_0(m)$ is bounded between two linear functions.

In the next two sections, we prove Theorem 4. We first provide the key lemmas that we use in our proof and then present the proof through a sequence of claims. An important remark is that Burr's lower bound in the case of trees versus odd cycles can be shown by considering the complete bipartite graph $K_{n - 1, n - 1}$. Because it is bipartite, it does not contain the odd cycle $C_m$ as a subgraph. Furthermore, $\overline{K_{n - 1, n - 1}}$ consists of two connected components of size $n - 1$ and therefore does not contain $T_n$ as a subgraph. This extremal graph $K_{n - 1, n - 1}$ will be useful in motivating our proof of Theorem 4, the last steps of which are devoted to showing that any graph that is any counterexample to Theorem 4 would necessarily have a similar structure to $K_{n - 1, n - 1}$.

\section{Preliminaries and Lemmas}

We first provide the notation we will adopt on proving Theorem 4. Given a graph $G$, $d_X(v)$ denotes the degree of a vertex $v$ in a set $X \subseteq V(G)$ in $G$ and $\overline{d_X}(v)$ denotes the degree of $v$ in $X$ in $\overline{G}$, the complement graph of $G$. We similarly let $N_X(v)$ and $\overline{N_{X}}(v)$ denote the sets of neighbors of $v$ in the set $X$ in $G$ and $\overline{G}$, respectively. Note that $d_X(v) + \overline{d_X}(v) = |X|$, $d_X(v) = |N_X(v)|$ and $\overline{d_{X}}(v) = |\overline{N_X}(v)|$ for any set $X \subseteq V(G)$ with $v \not \in X$. When the set $X$ is omitted, $X$ is implicitly $V(G)$ where $G$ is the graph in which the vertex $v$ lies. We denote the maximum and minimum degrees of a graph $H$ by $\Delta(H)$ and $\delta(H)$, respectively. Given a subset $S \subseteq V(G)$, we denote the subgraph of $G$ induced by the set $S$ by $G[S]$.

We now present several lemmas that will be used throughout the proof of Theorem 4. The first two lemmas are extensions of classical results.

\begin{lemma}
Let $F$ be a forest with $k$ connected components. Let $w_1, w_2, \dots, w_k \in V(F)$ be vertices from distinct connected components of $F$. Let $H$ be a graph with $\delta(H) \ge |V(F)| - 1$ and $u_1, u_2, \dots, u_k$ be distinct vertices in $V(H)$. Then $F$ can be embedded in $H$ such that $w_i$ is mapped to $u_i$ for all $1 \le i \le k$.
\end{lemma}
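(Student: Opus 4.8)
The plan is to embed $F$ greedily, one vertex at a time, processing each connected component along a rooted ordering so that whenever we place a new vertex of $F$, it has exactly one already-embedded neighbor, and we only need to find an image for it avoiding the (few) vertices of $H$ already used. First I would set $n = |V(F)|$ and fix, for each component $F_j$ of $F$, a spanning tree rooted at $w_j$; then order the vertices $v_1, v_2, \dots, v_n$ of $F$ so that each $w_j$ appears before all other vertices of its component, and so that every non-root vertex $v_i$ comes after its parent in the rooted tree. I would build the embedding $\varphi$ by induction on $i$: start by setting $\varphi(w_j) = u_j$ for all $j$ (this is where the hypothesis that the $w_j$ lie in distinct components and the $u_j$ are distinct is used), and then for $i$ increasing, if $v_i$ is a root it is already assigned, and otherwise let $p$ be its parent, already embedded to $\varphi(p)$.

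The key step is the extension: when we come to embed a non-root vertex $v_i$, at most $n - 2$ vertices of $H$ have been used so far (we have embedded at most $n-1$ vertices before $v_i$, but $\varphi(p)$ is a neighbor we are allowed to use), so among the at least $|V(F)| - 1 = n-1$ neighbors of $\varphi(p)$ in $H$ there is at least one, call it $x$, not in the image of $\varphi$; set $\varphi(v_i) = x$. Because each non-root vertex is attached to its parent and to no earlier-processed vertex other than possibly descendants-not-yet-placed (which is impossible by the ordering), the only edge of $F$ incident to $v_i$ and to an already-embedded vertex is the edge to its parent, and that edge is realized in $H$ by construction; hence $\varphi$ remains a valid embedding at every stage. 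After all $n$ vertices are processed, $\varphi$ is injective and edge-preserving, i.e. an embedding of $F$ into $H$ with $\varphi(w_i) = u_i$ for all $i$, as required.

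The main obstacle — really the only subtlety — is bookkeeping the ordering so that every non-root vertex is inserted \emph{after} its parent but the roots are inserted first; this is exactly what lets us guarantee that $v_i$ has precisely one embedded neighbor (its parent) at the moment of insertion, so that the degree bound $\delta(H) \ge |V(F)| - 1$ suffices to find a free image. One should check the edge cases: if $|V(F)| = 1$ the claim is trivial, and if some component is a single vertex $w_j$ then nothing beyond $\varphi(w_j) = u_j$ is needed for it. The count "at most $n-2$ forbidden vertices" is the place to be slightly careful — before embedding $v_i$ we have used at most $i - 1 \le n - 1$ vertices of $H$, one of which is $\varphi(p)$, and neighbors of $\varphi(p)$ exclude $\varphi(p)$ itself, so the number of neighbors of $\varphi(p)$ that are already used is at most $i - 2 \le n - 2 < \delta(H)$, leaving a valid choice for $\varphi(v_i)$.
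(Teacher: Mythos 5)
Your proposal is correct and follows essentially the same greedy-extension argument as the paper: fix the roots $w_j \mapsto u_j$, then attach each remaining vertex to its already-embedded parent using the bound $\delta(H) \ge |V(F)|-1$ to find an unused neighbor. The only cosmetic difference is that you make the processing order explicit via rooted spanning trees, whereas the paper argues the extension step can always be performed until all of $F$ is embedded; the counting is the same.
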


\begin{proof}
Begin by mapping $w_i$ to $u_i$ for all $1 \le i \le k$. Greedily extend this embedding as follows: if $x \in V(F)$ has not been embedded to $H$ but a neighbor $y \in N_F(x)$ has been embedded to $z \in V(H)$ then map $x$ to a vertex in $N_H(z)$ that has not been embedded to, if such a vertex exists. Since initially a vertex from each connected component of $F$ is mapped to $H$, if all of $F$ has not yet been embedded to $H$ then there must be such a vertex $x$ adjacent to a vertex $y$ that has already been embedded. For the described embedding to fail, there must be a point in this procedure when at most $|V(F)| - 1$ vertices have been embedded to and the embedding cannot be extended. Therefore all of $N_H(z) \cup \{ z \}$ has been embedded to for some $z \in V(H)$. However $|N_H(z) \cup \{ z \}| \ge \delta(H) + 1 \ge |V(F)|$, which is a contradiction. Therefore the embedding succeeds, proving the lemma.
\end{proof}

The next lemma is also included in our simultaneous work on Ramsey numbers of trees and unicyclic graphs versus fans in \cite{brennan}, where it appears as Lemma 2.

\begin{lemma}
Given a tree $T$ with $|V(T)| \ge 3$, there exists a vertex $v \in V(T)$ satisfying that the vertices of the forest $T - v$ can be partitioned into two disjoint sets $K$ and $H$ such that there are no edges between $K$ and $H$ and
$$\frac{1}{3}(|V(T)|-1) \le |K|, \, |H| \le \frac{2}{3} (|V(T)|-1).$$
\end{lemma}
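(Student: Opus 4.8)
The plan is to find a good "centroid-like" vertex of $T$ and split the rest of the tree by a greedy accumulation of subtrees. First I would recall the standard fact that every tree $T$ has a centroid vertex $v$ such that every connected component of $T - v$ has at most $\frac{1}{2}|V(T)|$ vertices; in fact I only need the weaker statement that there is a vertex $v$ all of whose "branches" (the components of $T-v$) have at most $\frac{2}{3}(|V(T)|-1)$ vertices, which one can obtain directly: start at any vertex, and as long as some branch has more than $\frac{2}{3}(|V(T)|-1)$ vertices, move $v$ into that branch. Since the oversized branch has more than half the vertices of $T-v$, this process strictly increases $|V(T)|$ minus the size of the largest branch, so it terminates at the desired $v$.

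Next, with $v$ fixed, let the components of $T - v$ be $B_1, B_2, \dots, B_r$ with sizes $b_1 \ge b_2 \ge \cdots \ge b_r$, so $\sum_i b_i = |V(T)| - 1 =: N$ and $b_1 \le \frac{2}{3}N$. I want to partition $\{B_1,\dots,B_r\}$ into two groups whose total sizes $|K|$ and $|H|$ both lie in $[\frac{1}{3}N, \frac{2}{3}N]$. The key step is a routine "bin-packing" argument: order the components by decreasing size and add them one at a time to $K$ until the running total first reaches at least $\frac{1}{3}N$. At the moment this threshold is crossed by adding some $B_j$, the total before adding $B_j$ was less than $\frac{1}{3}N$, and $b_j \le b_1 \le \frac{2}{3}N$, so the new total $|K|$ satisfies $|K| < \frac{1}{3}N + \frac{2}{3}N = N$; combined with $|K| \ge \frac{1}{3}N$ this does not yet suffice, so I need to be a bit more careful. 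The cleaner approach: since $b_1 \le \frac{2}{3}N$, either $b_1 \ge \frac{1}{3}N$, in which case take $K = \{B_1\}$ and $H$ the rest (then $|K| \in [\frac13 N, \frac23 N]$ and $|H| = N - |K| \in [\frac13 N, \frac23 N]$ as well), or $b_1 < \frac{1}{3}N$, in which case every component is small, and greedily adding components until the total first reaches $[\frac{1}{3}N, \frac{2}{3}N)$ cannot overshoot past $\frac{2}{3}N$ because each added piece has size $< \frac{1}{3}N$; once the running sum is at least $\frac13 N$ it is at most $\frac23 N$, and the complement is automatically in the same range.

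Finally I would assemble these pieces: set $K$ equal to the union of the vertex sets of the chosen components and $H$ the union of the remaining ones; since distinct components of $T-v$ have no edges between them, there are no edges between $K$ and $H$, and the size bounds are exactly what the two cases above establish. The main obstacle — really the only subtlety — is handling the overshoot in the greedy step, i.e. ruling out the scenario where the first time the partial sum enters $[\frac13 N, \infty)$ it actually lands above $\frac23 N$; splitting into the two cases according to whether the largest branch already has at least $\frac13 N$ vertices is what makes this clean, since in the complementary case the absence of any large branch is precisely what bounds the overshoot. A minor point to check is that $N = |V(T)| - 1 \ge 2$ (guaranteed by $|V(T)| \ge 3$), so the interval $[\frac13 N, \frac23 N]$ is nonempty and the statement is non-vacuous.
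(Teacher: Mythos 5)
Your proposal is correct and follows essentially the same strategy as the paper: walk toward the heaviest branch of $T-v$ until no branch is too large, then partition whole components of $T-v$ greedily, splitting into cases according to whether a single component already has size at least $\frac{1}{3}(|V(T)|-1)$. The paper works with the stronger centroid condition (every component of size less than $|V(T)|/2$) and organizes the cases around $d_T(v)$ and increasing prefix sums, but its final sub-case (taking $K = C_{t+1}$ alone when the prefix overshoots) is exactly your large-component case, so the underlying argument is the same.
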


\begin{proof}
Note that for any vertex $v \in V(T)$, the forest $T - v$ has $d_T(v)$ connected components. Now consider the following procedure. Set $v$ initially to be an arbitrary leaf of $T$. At each step, if $T - v$ has a connected component $C$ with $|C| \ge |V(T)|/2$, set $v$ to the unique neighbor $u$ of $v$ in $C$. If no such connected component $C$ exists, terminate the procedure. Observe that $T - u$ has a connected component of size $|V(T)| - |C|$ and one or more connected components with the sum of their sizes equal to $|C| - 1$. Therefore either $|C| = |V(T)|/2$ or the size of the largest connected component of $T - v$ decreases on setting $v$ to $u$. This implies that either the procedure leads to a vertex $v$ such that $T - v$ has a connected component $C$ of size $|C| = |V(T)|/2$ or terminates at a vertex $v$ such that all connected components of $T - v$ are strictly smaller than $|V(T)|/2$.

If $v$ satisfies that $T - v$ has a connected component $C$ of size $|C| = |V(T)|/2$, then let $K = C$ and $H = V(T - v) - C$. We have that $|H| = |V(T)|/2 - 1$ and $|K| = |C| = |V(T)|/2$, which implies the lemma since $|V(T)|$ must be even and hence $|V(T)| \ge 4$. Note $d_T(v) = 1$ is impossible because of the condition on $v$ and since $|V(T)| \ge 3$. If $d_T(v) = 2$, then $T - v$ has two connected components $K$ and $H$ with $|K| + |H| = |V(T)| - 1$. Since $|K|, |H| < |V(T)|/2$, it holds that $|K| = |H| = (|V(T)|-1)/2$, in which case the lemma is also true.

Now consider the case in which $d_T(v) = d \ge 3$. Let the connected components of $T - v$ be $C_1, C_2, \dots, C_d$. Here, it must hold that $|V(T)| \ge 4$. Without loss of generality assume that $|C_1| \le |C_2| \le \cdots \le |C_d| < |V(T)|/2$. Since $d \ge 3$ and $|C_1| + |C_2| + \cdots + |C_d| = |V(T)| - 1$, we have that $|C_1| \le (|V(T)| - 1)/3$. Let $t$ be the largest integer such that $|C_1| + |C_2| + \cdots + |C_t| \le (|V(T)|-1)/3$ holds. If $t = d - 1$, then $|C_d| \ge 2(|V(T)|-1)/3$ which is impossible because $|C_d| < |V(T)|/2$. This implies that $t \le d - 2$. By definition, $|C_1| + |C_2| + \cdots + |C_{t+1}| > (|V(T)|-1)/3$. If $|C_1| + |C_2| + \cdots + |C_{t+1}| \le 2(|V(T)|-1)/3$, then letting $K = C_1 \cup C_2 \cup \cdots \cup C_{t+1}$ and $H = C_{t+2} \cup C_{t+3} \cup \cdots \cup C_d$ yields valid sets $K$ and $H$. If $|C_1| + |C_2| + \cdots + |C_{t+1}| > 2(|V(T)|-1)/3$, then $(|V(T)|-1)/3 < |C_{t+1}| < |V(T)|/2 \le 2(|V(T)|-1)/3$. In this case, letting $K = C_{t+1}$ and $H = C_1 \cup \cdots \cup C_t \cup C_{t+2} \cup \cdots \cup C_d$ yields the desired sets $K$ and $H$. This proves the lemma.
\end{proof}

Given a graph $G$ and positive integer $n$, let $ex(n, G)$ be the maximum number of edges that a graph on $n$ vertices can have without containing $G$ as a subgraph. This last lemma by Erdos and Galliai determines $ex(n, P_k)$, where $P_k$ is a path with $k$ edges and $k + 1$ vertices. It can be found in \cite{erdosgallai} as Theorem 2.6.

\begin{lemma}[Erdos and Gallai \cite{erdosgallai}]
For all positive integers $n$ and $k$,
$$ex(n, P_k) \le \frac{n(k-1)}{2}$$
where equality holds if and only if $k$ divides $n$, in which case the only graph achieving equality is a union of $\frac{n}{k}$ disjoint copies of $K_k$.
\end{lemma}

\section{Proof of Theorem 4}

Let $G$ be a graph with $2n - 1$ vertices and assume for contradiction that $G$ does not contain $C_m$ as a subgraph and $\overline{G}$ does not contain $T_n$ as a subgraph, where $n \ge 25m$. The result $R(T_n, K_m) = (n-1)(m-1) + 1$ in \cite{chvatal1977tree} applied with $m = 3$ yields that $R(T_n, C_3) = 2n - 1$ for all $n$. Therefore, it suffices to prove the result in the case when $m \ge 5$.

Our proof of Theorem 4 uses the following key ideas. The lack of a tree in $\overline{G}$ guarantees a large degree vertex in $G$. The absence of an $m$-cycle in $G$ along with this high degree vertex implies there is no path with $m - 1$ vertices among its neighbors, which is highly restrictive. The resulting constraints along with two methods for embedding trees yield that there are two large sets $S_1$ and $S_2$ of vertices in $G$ with a large fraction of the edges between them present. The remainder of the proof is devoted to showing that $G$ must have a similar structure to the extremal graph $K_{n-1, n-1}$. The lack of a length $m$ cycle alternating between these two sets shows that $S_1 \cup S_2$ induces a bipartite subgraph of $G$ and imposes significant constraints on vertices not in $S_1$ and $S_2$, which are enough to yield a contradiction.

We now proceed to present the proof of Theorem 4 through a series of claims. The first claim bounds the number of edges in a set of neighbors of a vertex. The second claim uses this bound to guarantee that a set of neighbors of a vertex contains a large subset that induces a subgraph with high minimum degree.

\begin{claim}
For any vertex $v \in V(G)$ and set $X \subseteq V(G)$, the number of edges in the induced subgraph $G[N_X(v)]$ is at most $\frac{1}{2}(m-3)d_X(v)$.
\end{claim}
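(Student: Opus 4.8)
The plan is to argue by contradiction: suppose $G[N_X(v)]$ has at least $(m-2)d_X(v)$ edges. Writing $N = N_X(v)$, we have $|N| = d_X(v)$, so $G[N]$ has at least $(m-2)|N|$ edges. By Lemma~1 applied with $k = m-2$, the induced subgraph $G[N]$ contains a subgraph $K$ with $\delta(K) \ge m-2$. In particular $|V(K)| \ge m-1$, and since every vertex of $K$ has at least $m-2$ neighbours inside $K$, standard greedy path-growing shows $K$ contains a path on $m-1$ vertices, i.e.\ a path of length $m-2$. Indeed, take a longest path $P = x_1 x_2 \cdots x_\ell$ in $K$; all neighbours of $x_1$ lie on $P$ (else $P$ could be extended), so $\ell - 1 \ge d_K(x_1) \ge m-2$, giving $\ell \ge m-1$ and hence a path on $m-1$ vertices with both endpoints in $N = N_X(v) \subseteq N(v)$.

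Now I would close the loop by building a cycle $C_m$ in $G$. Let $x_1 x_2 \cdots x_{m-1}$ be such a path, with all $x_i \in N(v)$. Then $v x_1 x_2 \cdots x_{m-1} v$ is a cycle on $m$ vertices in $G$: it uses the path of length $m-2$ together with the two edges $v x_1$ and $v x_{m-1}$, which are present since $x_1, x_{m-1} \in N(v)$ and $v \notin N(v)$ (so in particular $v \neq x_1, x_{m-1}$). This produces a copy of $C_m$ in $G$, contradicting our assumption that $G$ is $C_m$-free. Therefore $G[N_X(v)]$ has fewer than $(m-2)d_X(v)$ edges, as claimed.

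The only mild subtlety is making sure the path genuinely has $m-1$ \emph{distinct} vertices all lying in $N(v)$, and that $v$ is not among them; both are immediate since the path lives entirely inside $N_X(v) \subseteq N(v)$ and $v \notin N_X(v)$. I do not expect any real obstacle here — the argument is a direct combination of Lemma~1 with the elementary fact that a graph of minimum degree $d$ contains a path on $d+1$ vertices. One could alternatively phrase the path extraction without invoking Lemma~1 at all (iteratively deleting vertices of degree $\le m-3$ from $G[N]$, exactly as in the proof of Lemma~1), but routing through Lemma~1 keeps the exposition uniform with the rest of the section.
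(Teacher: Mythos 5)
Your proposal is correct and follows essentially the same route as the paper: contradiction, Lemma~1 with $k = m-2$ to extract a subgraph $K$ of minimum degree at least $m-2$, a path on $m-1$ vertices inside $K$, and closure through $v$ to form a $C_m$. The only (immaterial) difference is that you extract the path via the standard longest-path argument, whereas the paper applies its greedy embedding Lemma~2 to a path on $m-1$ vertices; both are valid and your handling of the distinctness of the path vertices and of $v$ is fine.
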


\begin{proof}
Assume for contradiction that the number of edges in $G[N_X(v)]$ is greater than $\frac{1}{2} (m - 3)d_X(v)$. By Lemma 7, there is a path $P$ in $G[N_X(v)]$ with $m - 2$ edges and $m - 1$ vertices. Since $v$ is adjacent to all vertices of $P$, the vertices $\{v\} \cup P$ form a cycle of length $m$, which is a contradiction. The claim follows.
\end{proof}

\begin{claim}
For any vertex $v \in V(G)$, subset $X \subseteq V(G)$ and positive real number $D$, there is a subset $S \subseteq N_X(v)$ such that
$$|S| \ge \left( 1 - \frac{(m - 3)}{2D} \right) d_X(v) \quad \text{and} \quad \delta(\overline{G[S]}) \ge |S| - D.$$
\end{claim}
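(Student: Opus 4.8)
The plan is to combine Claim~1 with a standard ``delete the high-degree vertices'' argument. First observe that, for a subset $S \subseteq N_X(v)$, the two desired conclusions amount to a single condition on $G[S]$: since $\overline{d_S}(w) = |S| - 1 - d_S(w)$ for each $w \in S$, we have $\delta(\overline{G[S]}) = |S| - 1 - \Delta(G[S])$, so $\delta(\overline{G[S]}) \ge |S| - D$ is equivalent to $\Delta(G[S]) \le D - 1$. Thus it suffices to find a large $S \subseteq N_X(v)$ inducing a subgraph of small maximum degree, and Claim~1 provides exactly the sparsity needed: $G[N_X(v)]$ has fewer than $(m - 2)\, d_X(v)$ edges.

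To extract such an $S$, I would run the obvious maximal deletion process. Set $R := N_X(v)$; as long as some vertex $u \in R$ has $d_R(u) \ge D$, delete one such $u$ from $R$; let $S$ be the set that remains. Each deletion destroys at least $D$ edges — the edges incident to the deleted vertex in the current graph — and no edge is destroyed more than once, so if the process makes $t$ deletions then $tD$ is at most the number of edges of $G[N_X(v)]$, whence $tD < (m - 2)\, d_X(v)$ by Claim~1. Consequently
$$|S| = d_X(v) - t > \left(1 - \frac{m-2}{D}\right) d_X(v),$$
which is the first conclusion. For the second, the process halted, so $d_S(w) < D$ for every $w \in S$; hence $\overline{d_S}(w) = |S| - 1 - d_S(w) \ge |S| - D$, and taking the minimum over $w \in S$ yields $\delta(\overline{G[S]}) \ge |S| - D$.

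I do not expect a real obstacle here: both conclusions fall out of the single deletion process, powered by the sparsity in Claim~1. The only slightly delicate point is the integrality bookkeeping when $D$ is not an integer — one must be a little careful about whether the surviving vertices have degree at most $D - 1$ or merely at most $\lceil D \rceil - 1$, and use the strict inequality of Claim~1 to absorb the rounding; when $D$ is an integer this subtlety vanishes. (The trivial case $d_X(v) = 0$ makes the strict size inequality degenerate, but the claim is only invoked for vertices of large degree, so this does not arise.)
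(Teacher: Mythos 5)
Your proposal is correct and is essentially identical to the paper's proof: the same deletion procedure on $R = N_X(v)$ removing vertices of $R$-degree at least $D$, the same edge-count from Claim 1 bounding the number of deletions by $(m-2)d_X(v)/D$, and the same translation of the termination condition into a bound on $\delta(\overline{G[S]})$. The integrality caveat you raise for non-integer $D$ is real but applies equally to the paper's own write-up (which asserts $\Delta(G[S]) \le D-1$ from $\Delta(G[S]) < D$), and it is harmless for the way the claim is used.
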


\begin{proof}
Consider the following procedure. Begin by setting $R = N_X(v)$. At each step, if there is a vertex $u \in R$ such that $d_{R}(u) \ge D$, then set $R = R \backslash \{ u \}$, otherwise terminate the procedure. At each step, it follows that the number of edges in $G[R]$ decreases by at least $D$. Let $S$ denote the set $R$ obtained once the procedure has terminated. By Claim 8, the number of edges in $G[R]$ begins no greater than $\frac{1}{2}(m - 3)d_X(v)$. Therefore the number of steps $t$ of the procedure satisfies that $t \le (m - 3)d_X(v)/2D$. Since $t$ vertices are removed in this procedure, we have that $S$ has size
$$|S| = d_X(v) - t \ge \left( 1 - \frac{(m - 3)}{2D} \right) d_X(v).$$
Furthermore, for this procedure to terminate it must follow that $\Delta(G[S]) \le D - 1$. This implies that
$$\delta(\overline{G[S]}) = |S| - 1 - \Delta(G[S]) \ge |S| - D.$$
Therefore $S$ has the desired properties, completing the proof of the claim.
\end{proof}

If $\delta(\overline{G}) \ge n - 1$ then by Lemma 5, $T_n$ can be embedded into $\overline{G}$. Therefore $\delta(\overline{G}) \le n - 2$ which implies that $\Delta(G) = 2n - 2 - \delta(\overline{G}) \ge n$. Let $v \in V(G)$ a maximum degree vertex of $G$ with $d(v) = \Delta(G) \ge n$. Applying Claim 9 with $D = \sqrt{\frac{1}{2}(m-3)n}$ yields that there is a subset $S_1 \subseteq N(v)$ such that
$$|S_1| \ge \left( 1 - \frac{(m - 3)}{2D} \right) d(v) \ge n - \sqrt{\frac{1}{2}(m-3)n}.$$
Furthermore it follows that
$$\delta(\overline{G[S_1]}) \ge |S_1| - D \ge n - \sqrt{2(m-3)n}.$$
Note that the choice $D = \sqrt{\frac{1}{2}(m-3)n}$ maximizes this lower bound on $\delta(\overline{G[S_1]})$. Let $O_1 = V(G) \backslash S_1$. The next claim is the key ingredient to show that there is another large set $S_2$ analogous to $S_1$ and disjoint from $S_1$ in $G$. The proof of this claim applies a method to greedily embed trees used in the proof of Claim 3.3 in \cite{brennan}.

\begin{claim}
There is a vertex $u \in V(G)$ such that
$$d_{O_1}(u) \ge n - \sqrt{\frac{1}{2}(m - 3)n} + 1.$$
\end{claim}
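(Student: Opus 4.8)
The plan is to argue by contradiction: suppose $d_{O_1}(u) \le n - \sqrt{(m-2)n}$ for every vertex $u \in V(G)$, and derive that $T_n$ embeds in $\overline{G}$. The intuition is that $S_1$ is a near-clique in $\overline{G}$ (its complement has minimum degree $\ge n - 2\sqrt{(m-2)n}$, which is large), while $O_1 = V(G) \setminus S_1$ is small (of size $2n - 1 - |S_1| \le n - 1 + \sqrt{(m-2)n}$). If no vertex has many $G$-neighbors into $O_1$, then every vertex has many $\overline{G}$-neighbors into $O_1$, so $O_1$ is well-connected to everything in $\overline{G}$. Combining the high $\overline{G}$-connectivity inside $S_1$ with the high $\overline{G}$-connectivity between $O_1$ and the rest of the graph should give enough room to greedily embed $T_n$ into $\overline{G}$, contradicting our standing assumption.

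First I would record the size bounds: $|S_1| > n - \sqrt{(m-2)n}$, so $|O_1| = 2n - 1 - |S_1| < n - 1 + \sqrt{(m-2)n}$, and for $n \ge 50m$ (with $m \ge 5$) the quantity $\sqrt{(m-2)n}$ is small compared to $n$, say at most $n/7$ or so — this is the kind of crude numerical slack the paper has been using. Next, from the contradiction hypothesis, $\overline{d_{O_1}}(u) = |O_1| - d_{O_1}(u) \ge \sqrt{(m-2)n} - 1 + (|O_1| - (n - \sqrt{(m-2)n}))$; more usefully, every vertex $u$ satisfies $\overline{d_{O_1}}(u) \ge |O_1| - (n - \sqrt{(m-2)n})$, and I would want this to be a decent positive fraction of $n$. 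The embedding strategy I would then use mirrors Claim 3.3 / Lemma 2: use Lemma 3 to split $T_n$ (with $|V(T_n)| = n \ge 3$) around a separating vertex $v_0$ into two forests on vertex sets of size between roughly $n/3$ and $2n/3$; embed $v_0$ into $O_1$; then embed the two pieces — one primarily inside $S_1$ using $\delta(\overline{G[S_1]}) \ge n - 2\sqrt{(m-2)n}$ and Lemma 2, the other using the $\overline{G}$-edges from $O_1$ outward — being careful that the greedy extension never runs out of room, which requires that at each stage the number of already-used vertices stays below the relevant minimum-degree bound.

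The main obstacle I expect is the bookkeeping in the embedding: Lemma 2 requires the host graph to have minimum degree at least (size of forest) $- 1$, but $\overline{G[S_1]}$ only has minimum degree $\approx n - 2\sqrt{(m-2)n}$, which comfortably exceeds $2n/3$ but not $n$, so the forest embedded into $S_1$ must have fewer than $n - 2\sqrt{(m-2)n}$ vertices — hence the need to route the other (up to $\approx 2n/3$-sized, but chosen to be the smaller or appropriately bounded) piece through $O_1$ and the rest of $V(G)$, where the minimum $\overline{G}$-degree into the relevant vertex set must be verified to beat the size of that piece. Making the two size constraints from Lemma 3 ($\le \frac{2}{3}(n-1)$ each) fit simultaneously against the two available minimum-degree bounds, using only the slack from $n \ge 50m$, is the delicate point; I would expect that choosing $v_0$ via Lemma 3 so that the piece destined for $S_1$ has size at most $\tfrac{2}{3}(n-1) < n - 2\sqrt{(m-2)n}$ works directly, and the other piece, of size $\ge \tfrac{1}{3}(n-1)$, gets absorbed using the abundant $\overline{G}$-degree guaranteed by the contradiction hypothesis together with the largeness of $|O_1|$ relative to $n - \sqrt{(m-2)n}$. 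Once the embedding goes through, we contradict the assumption that $\overline{G}$ contains no $T_n$, establishing the claim.
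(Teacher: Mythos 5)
Your overall strategy (negate the claim, then embed $T_n$ in $\overline{G}$) is viable in principle, but the specific embedding plan has a quantitative gap that I do not see how to repair. The negated hypothesis, $d_{O_1}(u) \le n - \sqrt{(m-2)n}$ for all $u$, gives $\overline{d_{O_1}}(u) \ge |O_1| - (n - \sqrt{(m-2)n})$, and you hope this is ``a decent positive fraction of $n$.'' It is not. Since $\delta(\overline{G[S_1]}) \le n-2$ (otherwise Lemma 2 embeds $T_n$ in $\overline{G[S_1]}$) and $\delta(\overline{G[S_1]}) \ge |S_1| - \sqrt{(m-2)n}$, we get $|S_1| \le n - 2 + \sqrt{(m-2)n}$, hence $|O_1| = 2n-1-|S_1|$ lies between $n+1-\sqrt{(m-2)n}$ and $n-1+\sqrt{(m-2)n}$. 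So the guaranteed complement degree into $O_1$ is $|O_1| - (n - \sqrt{(m-2)n})$, which can be as small as $1$ and is never more than about $2\sqrt{(m-2)n} < n/3$ for $n \ge 50m$. That is nowhere near the $(n-1)/3$ you would need to route a Lemma 3 piece of the tree through $O_1$; moreover you have no lower bound at all on $\delta(\overline{G[O_1]})$ or on the $\overline{G}$-degrees from $O_1$ back into $S_1$, so neither Lemma 2 nor a greedy extension can place a linear-sized forest outside $S_1$. The hypothesis you are negating is simply too weak to make $O_1$ ``well-connected in $\overline{G}$.''

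The paper avoids this entirely by not negating the claim. It embeds a maximal subtree $H$ of $T_n$ on $\delta(\overline{G[S_1]})+1$ vertices into $\overline{G[S_1]}$ by Lemma 2, then greedily extends this, one leaf at a time, toward an embedding of $T_n$ into all of $\overline{G}$. Since $\overline{G}$ contains no $T_n$, the extension must get stuck at some embedded vertex $w_2$ while at most $n-1$ vertices have been used; being stuck means $w_2$ is $G$-adjacent to every one of the at least $n$ unused vertices, and since all but at most $\sqrt{(m-2)n}-1$ vertices of $S_1$ are already used, at least $n - \sqrt{(m-2)n} + 1$ of those $G$-neighbors lie in $O_1$. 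The high-degree vertex is read off from the failure point of the embedding rather than assumed not to exist --- this is the idea your proposal is missing.
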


\begin{proof}
By Lemma 5, any sub-forest of $T_n$ of size at most $\delta(\overline{G[S_1]}) + 1$ can be embedded in $\overline{G[S_1]}$. Since $\overline{G}$ does not contain $T_n$ as a subgraph, it must follow that $\delta(\overline{G[S_1]}) \le n - 2$. Note that $T_n$ has a connected subtree $H$ on $\delta(\overline{G[S_1]}) + 1$ vertices. For instance, such a subtree is obtained by removing a leaf of $T_n$ and repeatedly removing a leaf of the resulting tree until exactly $\delta(\overline{G[S_1]}) + 1$ vertices remain. By Lemma 5, $H$ can be embedded to $\overline{G[S_1]}$. Let $R \subseteq S_1$ denote the set of vertices that $V(H)$ is mapped to under this embedding.

We now define a procedure to greedily extend this embedding of $H$ to an embedding of $T_n$ in $\overline{G}$. At any point in this procedure, let $K$ denote the subgraph of $\overline{G}$ induced by the set of vertices that have so far been embedded to. Initially, $V(K) = R$ and, throughout the procedure, $R \subseteq V(K)$ remains true. Now extend this embedding by repeating the following: if $w_1, w_3 \in V(T_n)$ and $w_2 \in V(G)$ satisfy that (1) $w_1 \in V(T_n)$ has been mapped to $w_2 \in V(K)$, (2) $w_3 \in V(T_n)$ has not been embedded to $\overline{G}$ and (3) $w_3$ is adjacent to $w_1$ in $T_n$, then map $w_3$ to some vertex in $\overline{N_{V(G) \backslash V(K)}}(w_2)$ if it is non-empty. Since $T_n$ contains no cycles and $K$ remains connected throughout this procedure, each $w_3 \in V(T_n)$ that has not yet been embedded to $K$ has at most one neighbor among the vertices $V(T_n)$ that have been embedded to $K$. Furthermore, since $T_n$ is connected, if not all of $T_n$ has been embedded to $\overline{G}$ then some such $w_3 \in V(T_n)$ satisfying (1)$-$(3) must exist. Thus this embedding only fails if $\overline{N_{V(G) \backslash V(K)}}(w_2)$ is empty for some $w_2 \in V(K)$ at some point in the procedure.

Since $\overline{G}$ does not contain $T_n$ as a subgraph, this embedding procedure must fail. Therefore $\overline{N_{V(G) \backslash V(K)}}(w_2) = \emptyset$ for some $w_2 \in V(K)$ where $|V(K)| \le n - 1$ at some point during the procedure. Since $R \subseteq V(K)$, it follows that $V(G) \backslash V(K) \subseteq V(G) \backslash R$ and therefore
$$d_{V(G) \backslash R}(w_2) \ge d_{V(G) \backslash V(K)}(w_2) \ge |V(G) \backslash V(K)| \ge n.$$
Now note that Claim 9 guarantees that
$$|S_1 \backslash R| \le |S_1| - \delta(\overline{G[S_1]}) - 1 \le D - 1 = \sqrt{\frac{1}{2}(m-3)n} - 1.$$
Combining this bound with the previous inequality yields that
$$d_{O_1}(w_2) \ge d_{V(G) \backslash R}(w_2) - |S_1 \backslash R| \ge n - \sqrt{\frac{1}{2}(m -3)n} + 1.$$
Therefore $w_2$ is a vertex with the desired properties, proving the claim.
\end{proof}

From this point forward in the proof, let $d = d_{O_1}(u)$ where $u$ is the vertex guaranteed by Claim 10 and $d \ge n - \sqrt{\frac{1}{2}(m-3)n} + 1$. Now applying Claim 9 with $D = \sqrt{\frac{1}{2}(m-3)d}$ yields that there is a subset $S_2 \subseteq N_{O_1}(u)$ such that
$$|S_2| \ge \left( 1 - \frac{(m - 3)}{2D} \right) d = d - \sqrt{\frac{1}{2}(m-3)d}.$$
Furthermore it follows that
$$\delta(\overline{G[S_2]}) \ge |S_2| - D \ge d - \sqrt{2(m-3)d}.$$
Note that since $S_2 \subseteq N_{O_1}(u) \subseteq O_1$, it necessarily follows that $S_1$ and $S_2$ are disjoint. We remark that $S_1$ and $S_2$ are symmetric other than the fact that the lower bounds on $|S_2|$ and $\delta(\overline{G[S_2]})$ are weaker than those on $|S_1|$ and $\delta(\overline{G[S_1]})$. In order to obtain the bound $n \ge 25m$, we do not treat $S_1$ and $S_2$ completely symmetrically, which would entail discarding the better lower bounds for $S_1$. The next claim shows that a large fraction of the edges are present between $S_1$ and $S_2$.

\begin{claim}
Each $w \in S_1$ satisfies that $\overline{d_{S_2}}(w) < n - \delta(\overline{G[S_1]}) - 1$ and each $w \in S_2$ satisfies that $\overline{d_{S_1}}(w) < n - \delta(\overline{G[S_2]}) - 1$.
\end{claim}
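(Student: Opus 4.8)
The plan is to argue by contradiction: assuming the claim fails, I will embed $T_n$ into $\overline{G}$. The two statements are handled by the same method, so suppose first that some $w \in S_1$ satisfies $\overline{d_{S_2}}(w) \ge L$, where $L := n - \delta(\overline{G[S_1]}) - 1$; write $\delta_1 = \delta(\overline{G[S_1]})$ and $\delta_2 = \delta(\overline{G[S_2]})$. As in the proof of Claim 3.3, $\delta_1 \le n - 2$ (otherwise $T_n$ embeds in $\overline{G[S_1]}$ by Lemma 2), so $L \ge 1$; and the estimates preceding this claim give $\delta_1 \ge n - 2\sqrt{(m-2)n}$ and $\delta_2 > n - 3\sqrt{(m-2)n}$, whence $L \le 2\sqrt{(m-2)n} - 1$ and, since $n \ge 50m$ forces $7\sqrt{(m-2)n} \le n$, also $2L \le \delta_2 + 1$. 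The idea is to split $T_n$ so that one side can be planted in $\overline{G[S_1]}$ at $w$ and the other hung into $\overline{G[S_2]}$ through $w$'s many non-neighbours there.

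Concretely, I would first establish the following splitting fact: there is a vertex $x \in V(T_n)$ and a set $\mathcal{B}$ of branches of $T_n$ at $x$ (connected components of $T_n - x$) whose union $F$ satisfies $L \le |V(F)| \le \delta_2 + 1$ and $|\mathcal{B}| \le \overline{d_{S_2}}(w)$. To obtain this, take $x$ to be a centroid of $T_n$, so every branch at $x$ has at most $n/2$ vertices. If some branch $B$ has $|V(B)| \ge L$, put $\mathcal{B} = \{B\}$; then $L \le |V(B)| \le n/2 \le \delta_2 + 1$ and $|\mathcal{B}| = 1 \le \overline{d_{S_2}}(w)$. Otherwise every branch at $x$ has fewer than $L$ vertices, and adding branches to $\mathcal{B}$ in decreasing order of size until $|V(F)|$ first reaches $L$ uses at most $L \le \overline{d_{S_2}}(w)$ branches and overshoots by less than $L$, so $L \le |V(F)| < 2L \le \delta_2 + 1$.

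With such $x$ and $F$ in hand, set $H = T_n - V(F)$, a connected subtree containing $x$ with $|V(H)| = n - |V(F)| \le n - L = \delta_1 + 1$; by Lemma 2, embed $H$ into $\overline{G[S_1]}$ sending $x$ to $w$. For each $B \in \mathcal{B}$ let $r_B$ be the unique vertex of $B$ adjacent to $x$ in $T_n$, and choose distinct $z_B \in \overline{N_{S_2}}(w)$, which is possible since $|\mathcal{B}| \le \overline{d_{S_2}}(w)$. Applying Lemma 2 to the forest $F$ (whose components are the $B \in \mathcal{B}$, with the distinguished vertex $r_B$ of each sent to $z_B$) and the host $\overline{G[S_2]}$, which has minimum degree $\delta_2 \ge |V(F)| - 1$, embeds $F$ into $\overline{G[S_2]}$. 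Since $S_1$ and $S_2$ are disjoint the two embeddings are compatible, and the only edges of $T_n$ between $V(H)$ and $V(F)$ are the edges $x r_B$, which are sent to the edges $w z_B$ of $\overline{G}$; hence $\overline{G}$ contains $T_n$, a contradiction. The second statement runs the same way with $S_1$ and $S_2$ interchanged, $L$ replaced by $L' = n - \delta_2 - 1$, and $F$ embedded into $\overline{G[S_1]}$.

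The step I expect to be the genuine obstacle is the splitting fact in the form needed for the second statement. Because the lower bound on $\delta_2$ is weaker than that on $\delta_1$, $L'$ can be roughly $3\sqrt{(m-2)n}$, and when the centroid has no branch of size $\ge L'$ the crude estimate $|V(F)| < 2L'$ is, for $n$ as small as $50m$, not automatically inside $[L', \delta_1 + 1]$ for every $m$. Handling this requires choosing $\mathcal{B}$ more carefully — for instance, when the centroid has only few branches, taking $F$ to be all of them except one suitably chosen smaller branch, so that $|V(F)|$ lands in the (very wide) window $[L', \delta_1 + 1]$ while keeping $|\mathcal{B}|$ below $\overline{d_{S_1}}(w)$ — and it is here that the constant $50$ and a short case analysis on the centroid's degree get used.
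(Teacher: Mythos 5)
Your treatment of the first inequality (the case $w \in S_1$) is sound: the window $[L, \delta_2+1]$ for $|V(F)|$ is wide enough that the centroid-plus-greedy selection always lands in it, the component count is bounded by $L \le \overline{d_{S_2}}(w)$, and the two applications of Lemma 2 glue correctly along the edges $wz_B$. The high-level strategy -- split $T_n$ at a vertex $x$, plant the piece containing $x$ at $w$, and hang the remaining components from distinct non-neighbours of $w$ in the other set -- is exactly the paper's. But your proof of the second inequality has a genuine gap, precisely where you predict it. When every centroid branch has fewer than $L' = n - \delta_2 - 1$ vertices, the greedy bound $|V(F)| < 2L'$ only places $|V(F)|$ in $[L', \delta_1+1]$ if $2L' \le \delta_1 + 2$, which amounts to $8\sqrt{(m-2)n} \lesssim n$; at $n = 50m$ this fails for all $m \ge 11$, so this is not a matter of sharpening constants. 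The repair you gesture at does work -- if greedy (in decreasing order of size) overshoots, one checks that exactly the two largest branches $t_1 \ge t_2$ exceed $\delta_1 + 1 - L'$, that the rest sum to less than $n - \delta_1 - 2 \le 2\sqrt{(m-2)n} - 2$, and that taking every branch except the second-largest gives $|V(F)| = n - 1 - t_2 \in [L', \delta_1+1]$ (this reduces to $2\delta_1 + \delta_2 \ge 2n - 4$, which follows from $7\sqrt{(m-2)n} \le n$) with at most $2\sqrt{(m-2)n} < L' \le \overline{d_{S_1}}(w)$ components -- but none of this is in your writeup, so the second statement is unproved as it stands.

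The paper sidesteps this case analysis entirely by replacing the centroid with Lemma 3, which supplies $x$ and a partition $K \cup H$ of $T_n - x$ into unions of components with $(n-1)/3 \le |K|, |H| \le 2(n-1)/3$. Both size constraints then hold automatically and symmetrically, because $\delta_1 > 2(n-1)/3$ and $\delta_2 > (n-1)/2$, with no dependence on where $L$ or $L'$ happens to fall. The constraint you enforce by keeping $|\mathcal{B}|$ small -- that the number of hung components not exceed the number of available non-neighbours of $w$ -- is handled there as a separate second case, by shifting whole components from $K$ to $H$ until $d_{K'}(x) = \overline{d_{S_1}}(w)$ and verifying that the shifted partition still fits both windows. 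I would either adopt that balanced decomposition or write out your two-large-branches case explicitly; as submitted, only half the claim is established.
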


\begin{proof}
Before proving this claim, we first show the two inequalities
$$(n - 1)/2 \le \delta(\overline{G[S_2]}) \quad \text{and} \quad 2(n - 1)/3 \le \delta(\overline{G[S_1]}).$$
Here we apply the lower bound on $n$ in terms of $m$. In particular, if $n \ge 18m$, we have the following inequalities
\begin{align*}
\delta(\overline{G[S_1]}) &\ge n - \sqrt{2(m-3)n} > 2n/3  > 2(n - 1)/3, \quad \text{and} \\
d &\ge n - \sqrt{\frac{1}{2}(m-3)n} + 1 > 5n/6.
\end{align*}
Applying $d > 5n/6 \ge 15m$ to the lower bound on $\delta(\overline{G[S_2]})$ yields that
\begin{align*}
\delta(\overline{G[S_2]}) &\ge d - \sqrt{2(m-3)d}> \left( 1 - \frac{2}{\sqrt{30}} \right) d \\
&> \frac{5}{6} \left( 1 - \frac{2}{\sqrt{30}} \right) n > (n-1)/2
\end{align*}
since $\frac{5}{6} \left( 1 - \frac{2}{\sqrt{30}} \right) \approx 0.53$. We now proceed to the proof of the claim.

We first show that each $w \in S_2$ satisfies that $\overline{d_{S_1}}(w) < n - \delta(\overline{G[S_2]}) - 1$. Assume for contradiction that some $w \in S_2$ satisfies that $\overline{d_{S_1}}(w) \ge n - \delta(\overline{G[S_2]}) - 1$. By Lemma 6, there is some vertex $x \in V(T_n)$ such that there is a partition $K \cup H$ of the vertices of the forest $T_n - x$ such that there are no edges between $K$ and $H$ in $T_n$ and $(n-1)/3 \le |K|, |H| \le 2(n - 1)/3$. Without loss of generality assume that $|H| \le (n - 1)/2 \le |K|$.

First suppose that $d_{K}(x) \le \overline{d_{S_1}}(w)$. Consider the following embedding of $T_n$ into $\overline{G}$. Note that since $K$ and $H$ are unions of connected components of $T_n - x$, it follows that $H \cup \{ x \}$ is a subtree of $T_n$. Since
$$|H \cup \{ x \}| \le 1 + (n - 1)/2 \le \delta(\overline{G[S_2]}) + 1,$$
by Lemma 5 we have that $H \cup \{ x \}$ can be embedded in $\overline{G[S_2]}$ such that $x$ is mapped to $w$. Furthermore, note that $K$ is the union of connected components of $T_n - x$, $K$ is a sub-forest of $T_n$ and $N_K(x)$ consists of exactly one vertex from each of the connected components of $K$. Now note that since
$$|K| \le 2(n - 1)/3 \le \delta(\overline{G[S_1]}) + 1,$$
by Lemma 5 we have that $K$ can be embedded to $\overline{G[S_1]}$ such that $N_K(x)$ is mapped to $k = d_K(x)$ distinct vertices $y_1, y_2, \dots, y_k$ in $\overline{N_{S_1}}(w)$. Note this is possible since $d_{K}(x) \le \overline{d_{S_1}}(w)$. This yields a successful embedding of $T_n$ in $\overline{G}$, which is a contradiction since $\overline{G}$ does not contain $T_n$ as a subgraph.

Now suppose that $d_{K}(x) > \overline{d_{S_1}}(w) \ge n - \delta(\overline{G[S_2]}) - 1$. Observe that $K$ is the union of $d_K(x)$ connected components of $T_n - x$. Let $C$ be the union of $d_K(x) - \overline{d_{S_1}}(w)$ of these connected components. Let $K' = K \backslash C$ and $H' = H \cup C$. Note that $d_{K'}(x) = \overline{d_{S_1}}(w) \ge n - \delta(\overline{G[S_2]}) - 1$ and that
$$n - \delta(\overline{G[S_2]}) - 1 \le d_{K'}(x) \le |K'| \le 2(n - 1)/3 < \delta(\overline{G[S_1]}) + 1.$$
The lower bound on $|K'|$ implies that
$$|H' \cup \{ x \}| = n - |K'| \le \delta(\overline{G[S_2]}) + 1.$$
Now applying the embedding described above to $K'$ and $H'$ in place of $K$ and $H$ yields the same contradiction.

The same method shows that each $w \in S_1$ satisfies that $\overline{d_{S_2}}(w) < n - \delta(\overline{G[S_1]}) - 1$. Specifically, if $\overline{d_{S_2}}(w) \ge n - \delta(\overline{G[S_1]}) - 1$ for some $w \in S_1$, embedding the tree $K \cup \{ x \}$ to $\overline{G[S_1]}$ with $x$ mapped to $w$ and embedding the forest $H$ to $\overline{G[S_2]}$ as in the argument above yields a contradiction. This proves the claim.
\end{proof}

From this point forward in the proof of Theorem 4, let $m = 2 \ell + 1$. The next two claims together complete the proof that the sets $S_1$ and $S_2$ induce a nearly complete bipartite subgraph of $G$, further showing that the structure of $G$ is close to that of the extremal graph $K_{n-1,n-1}$.

\begin{claim}
If the vertices $x,y \in S_1$, then $|N_{S_2}(x) \cap N_{S_2}(y)| > \ell$ and, if $x,y \in S_2$, then $|N_{S_1}(x) \cap N_{S_1}(y)| > \ell$.
\end{claim}

\begin{proof}
If $x, y \in S_1$, then since $\overline{d_{S_2}}(x), \overline{d_{S_2}}(y) < n - \delta(\overline{G[S_1]}) - 1$ we have that
\begin{align*}
|N_{S_2}(x) \cap N_{S_2}(y)| &\ge |S_2| - \overline{d_{S_2}}(x) - \overline{d_{S_2}}(y) \\
&> |S_2| - 2(n - \delta(\overline{G[S_1]}) - 1).
\end{align*}
Similarly, if $x, y \in S_2$, then since $\overline{d_{S_1}}(x), \overline{d_{S_1}}(y) < n - \delta(\overline{G[S_2]}) - 1$ we have that
\begin{align*}
|N_{S_1}(x) \cap N_{S_1}(y)| &\ge |S_1| - \overline{d_{S_1}}(x) - \overline{d_{S_1}}(y) \\
&> |S_1| - 2(n - \delta(\overline{G[S_2]}) - 1).
\end{align*}
We will show that both of these lower bounds are at least $\ell = (m-1)/2$ when $n \ge 25m$. Assume that $n \ge \frac{1}{2}c^2 m$ and note the following inequalities
\begin{align*}
|S_1| &\ge n - \sqrt{\frac{1}{2}(m-3)n} > \left( 1 - \frac{1}{c} \right) n, \\
\delta(\overline{G[S_1]}) &\ge n - \sqrt{2(m-3)n} > \left( 1 - \frac{2}{c} \right) n, \\
d &\ge n - \sqrt{\frac{1}{2}(m-3)n} + 1 > \left( 1 - \frac{1}{c} \right) n \ge \frac{1}{2}c(c-1)m, \\
|S_2| &\ge d - \sqrt{\frac{1}{2}(m-3)d} > \left( 1 - \frac{1}{\sqrt{c(c-1)}} \right) d \\
&> \left( 1 - \frac{1}{c} - \frac{\sqrt{c-1}}{c \sqrt{c}} \right) n, \quad \text{and} \\
\delta(\overline{G[S_2]}) &\ge d - \sqrt{2(m-3)d} \ge \left( 1 - \frac{1}{c} - \frac{2\sqrt{c-1}}{c \sqrt{c}} \right) n.
\end{align*}
These inequalities imply that
\begin{align*}
|S_2| - 2(n - \delta(\overline{G[S_1]}) - 1) &> \left( 1 - \frac{5}{c} - \frac{\sqrt{c-1}}{c \sqrt{c}} \right) n \ge \frac{n}{2c^2} \ge \frac{m}{2} > \ell \quad \text{and} \\
|S_1| - 2(n - \delta(\overline{G[S_2]}) - 1) &> \left( 1 - \frac{3}{c} - \frac{4\sqrt{c-1}}{c \sqrt{c}} \right) n \ge \frac{n}{2c^2} \ge \frac{m}{2} > \ell
\end{align*}
as long as we have the inequalities
\begin{align*}
1 - \frac{5}{c} - \frac{\sqrt{c-1}}{c \sqrt{c}} &\ge \frac{1}{2c^2} \quad \text{and} \\
1 - \frac{3}{c} - \frac{4\sqrt{c-1}}{c \sqrt{c}} &\ge \frac{1}{2c^2}.
\end{align*}
Since $\sqrt{c-1} < \sqrt{c}$, rearranging yields that these inequalities hold if $2c(c - 7) \ge 1$, which is true when $c^2 = 50$. The claim follows.
\end{proof}

\begin{claim}
There are no edges in $G[S_1]$ and $G[S_2]$.
\end{claim}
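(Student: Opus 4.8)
The plan is to argue by contradiction. If $G[S_1]$ or $G[S_2]$ contains an edge, I will construct a cycle of length $m=2\ell+1$ in $G$, contradicting the standing assumption that $G$ contains no $C_m$. The two situations are mirror images of each other under interchanging the pairs $(v,S_1)$ and $(u,S_2)$, so I present the case of an edge inside $G[S_1]$ in detail. The facts I will use are: $v$ is adjacent to every vertex of $S_1$ and $u$ to every vertex of $S_2$, since $S_1\subseteq N(v)$ and $S_2\subseteq N_{O_1}(u)$; moreover $v\notin S_1$ and $u\notin S_2$, the latter because $u\notin N_{O_1}(u)$. Finally, since $n\ge 50m$, elementary estimates from the definitions of $S_1$ and $S_2$ (and the bound $d\ge n-\sqrt{(m-2)n}+1$) give $|S_1|>5n/6$, $|S_2|>2n/3$, $\delta(\overline{G[S_1]})>2(n-1)/3$ and $\delta(\overline{G[S_2]})>(n-1)/2$, so Claim 3.4 yields $\overline{d_{S_2}}(w)<(n-1)/3$ for every $w\in S_1$ and $\overline{d_{S_1}}(w)<(n-1)/2$ for every $w\in S_2$. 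In particular every vertex of $S_1$ has more than $|S_2|-(n-1)/3>n/3$ neighbours in $S_2$, every vertex of $S_2$ has more than $|S_1|-(n-1)/2>n/3$ neighbours in $S_1$, and $n/3>m$.

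Suppose $ab$ is an edge of $G[S_1]$. I will greedily build, one vertex at a time, a closed walk
$$ v,\; a,\; b,\; s_2^{(1)},\; s_1^{(1)},\; s_2^{(2)},\; s_1^{(2)},\; \dots,\; s_2^{(\ell-1)},\; s_1^{(\ell-1)},\; v $$
in which each $s_1^{(i)}$ is taken in $S_1$, each $s_2^{(i)}$ is taken in $S_2$, and all chosen vertices are kept pairwise distinct. Its vertex set has size $3+2(\ell-1)=2\ell+1=m$, so if the construction succeeds it is a copy of $C_m$ in $G$. The edges $va$ and $ab$ are present because $a,b\in S_1\subseteq N(v)$ and $ab\in E(G)$. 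At the step that chooses $s_2^{(i)}$, its predecessor in the walk lies in $S_1$ (it is $b$ if $i=1$ and $s_1^{(i-1)}$ otherwise), hence has more than $n/3>m$ neighbours in $S_2$; removing the at most $m$ already-used vertices still leaves an admissible choice. At the step that chooses $s_1^{(i)}$, its predecessor $s_2^{(i)}$ lies in $S_2$, hence has more than $n/3>m$ neighbours in $S_1$, and again an admissible choice remains. The walk closes because $s_1^{(\ell-1)}\in S_1\subseteq N(v)$, and $s_1^{(\ell-1)}\neq v$ because $v\notin S_1$. Since $m\ge 5$ we have $\ell-1\ge 1$, so the walk is non-degenerate, and the resulting $C_m\subseteq G$ is the desired contradiction. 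Hence $G[S_1]$ contains no edge.

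If instead $cd$ is an edge of $G[S_2]$, the same argument with $u$ replacing $v$ and the roles of $S_1$ and $S_2$ exchanged produces the closed walk
$$ u,\; c,\; d,\; s_1^{(1)},\; s_2^{(1)},\; \dots,\; s_1^{(\ell-1)},\; s_2^{(\ell-1)},\; u, $$
which is again a copy of $C_m$ in $G$: the edge $uc$ is present since $c\in S_2\subseteq N(u)$, the edge $cd$ is the assumed one, the greedy extensions go through by the same degree bounds, the closing edge $s_2^{(\ell-1)}u$ is present since $s_2^{(\ell-1)}\in S_2\subseteq N(u)$, and $u\neq c,d$ because $u\notin S_2$. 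Hence $G[S_2]$ contains no edge either, which completes the proof of the claim.

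The one point that needs care is verifying that each of the at most $m$ greedy extension steps can actually be carried out, i.e.\ that after the first vertices are fixed there is always an admissible next vertex: one in the prescribed part, adjacent to the current endpoint of the walk, and not yet used. This is precisely where $n\ge 50m$ enters, through Claim 3.4: every vertex of $S_1$ (resp.\ $S_2$) has more than $n/3$ neighbours in $S_2$ (resp.\ $S_1$), whereas at any step we must avoid at most $m<n/3$ already-chosen vertices. Everything else---that $v\notin S_1$ and $u\notin S_2$, so the edge endpoints differ from the vertex closing the cycle, and that the closed walks above have exactly $m=2\ell+1$ vertices and therefore length $m$---is routine bookkeeping that uses only the construction and the oddness of $m$.
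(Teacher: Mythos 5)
Your proof is correct, but it builds the forbidden $C_m$ by a different mechanism than the paper. The paper's cycle lives entirely inside $S_1 \cup S_2$: it fixes $\ell+1$ vertices $z_1,\dots,z_{\ell+1}$ of $S_1$ with $z_1z_{\ell+1}$ the assumed edge, and places one vertex of $S_2$ between each consecutive pair using a pairwise common-neighborhood bound $|N_{S_2}(z_i)\cap N_{S_2}(z_{i+1})| > \ell$, so the assumed edge is the \emph{closing} edge of the cycle. You instead route the cycle through the anchor vertex $v$ (resp.\ $u$), exploiting $S_1\subseteq N(v)$ and $S_2\subseteq N_{O_1}(u)$ to close the cycle, and use the assumed edge as an internal edge; the intermediate vertices are then found by a greedy alternating path that needs only the one-sided degree bounds $d_{S_2}(w) > n/3 > m$ for $w\in S_1$ and $d_{S_1}(w) > n/3 > m$ for $w\in S_2$, which follow from Claim 3.4 exactly as you compute. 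Your version is slightly more elementary in that it avoids the common-neighborhood counting and the attendant inequality-juggling in $c$ with $c^2 = 50$; the trade-off is that the paper's common-neighborhood observation is established once here and then reused verbatim in the proof of Claim 3.6, so if one adopted your argument one would still need to prove that observation (or adapt your anchor trick) for the next claim. Your bookkeeping is sound: $v\notin S_1$ and $u\notin S_2$ guarantee the closing vertex is new, the anchor is counted among the used vertices so the greedy choices avoid it even if $v\in S_2$ or $u\in S_1$, and $m\ge 5$ ensures $\ell - 1\ge 1$ so the walk is a genuine $m$-cycle.
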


\begin{proof}
Now assume for contradiction that there is an edge $xy$ in $G[S_1]$. Let $z_1, z_2, \dots, z_{\ell + 1}$ be any distinct vertices of $S_1$ such that $z_1 = x$ and $z_{\ell + 1} = y$. By Claim 12, there are at least $\ell$ vertices of $S_2$ adjacent to both $z_i$ and $z_{i+1}$ for each $1 \le i \le \ell$. Therefore we may choose distinct vertices $w_1, w_2, \dots, w_\ell$ in $S_2$ such that $w_i$ is adjacent to both $z_i$ and $z_{i+1}$ for all $1 \le i \le \ell$. Now note that the vertices $z_1, w_1, z_2, w_2, \dots, z_\ell, w_\ell, z_{\ell + 1}$ form a cycle of length $2\ell + 1 = m$ in $G$, which is a contradiction. A symmetric argument shows that there are no edges in $G[S_2]$. This completes the proof of the claim.
\end{proof}

Now let $U = V(G) \backslash (S_1 \cup S_2)$. The next claim is the final ingredient necessary to construct a successful embedding of $T_n$ to $\overline{G}$.

\begin{claim}
Each $w \in U$ is adjacent to vertices in at most one of $S_1$ and $S_2$.
\end{claim}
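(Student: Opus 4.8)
The plan is to argue by contradiction using the cycle-construction technique already developed in Claim~3.5. Suppose some vertex $w \in U$ has a neighbor $a \in S_1$ and a neighbor $b \in S_2$ in $G$. The key observation from Claim~3.5 gives, for any two vertices of $S_1$, more than $\ell$ common neighbors in $S_2$, and symmetrically for any two vertices of $S_2$, more than $\ell$ common neighbors in $S_1$. I want to use $w$ as a single ``bridge'' vertex and build a cycle of length exactly $m = 2\ell+1$ that passes through $w$, alternating appropriately between $S_1$ and $S_2$. The natural candidate: a path of the form $a = z_1, w_1, z_2, w_2, \dots$ that weaves between $S_1$ and $S_2$ using the common-neighbor observation, together with the edge $a w$ and the edge $w b$, closed up by one more alternating segment from $b$ back toward $a$.

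First I would set up the parity bookkeeping carefully: a cycle through $w$ that otherwise alternates between $S_1$ and $S_2$ will have length $2 + (\text{length of an alternating } S_1\!-\!S_2 \text{ path from } a \text{ to } b)$. An alternating path starting in $S_1$ at $a$ and ending in $S_2$ at $b$ has even length iff... — so I must check which parities of $m$ are achievable and, if the direct weaving gives the wrong parity, insert one extra detour. Concretely, since $S_1 \cup S_2$ induces a bipartite graph (Claim~3.5), any cycle avoiding $w$ inside $S_1 \cup S_2$ is even, so the odd cycle $C_m$ through this region must use $w$; and with exactly one non-bipartite vertex $w$ contributing its two incident edges $wa, wb$, a cycle $w, a, \dots, b, w$ has length $2 + (\text{length of the }a\text{--}b \text{ path through } S_1 \cup S_2)$, where that internal path alternates sides and hence has odd length (since $a \in S_1$, $b \in S_2$ lie on opposite sides of the bipartition). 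Thus the cycle length is $2 + \text{odd} = \text{odd}$, matching $m$, so I need an alternating $a$--$b$ path of length exactly $m - 2 = 2\ell - 1$.

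Then I would construct that path explicitly. An alternating path of length $2\ell - 1$ from $a \in S_1$ to $b \in S_2$ visits $\ell$ vertices of $S_1$ (including $a$) and $\ell$ vertices of $S_2$ (including $b$); write it as $a = p_1, q_1, p_2, q_2, \dots, p_\ell, q_\ell = b$ with $p_i \in S_1$, $q_i \in S_2$. Choose the $p_i$ to be any $\ell$ distinct vertices of $S_1$ with $p_1 = a$, and the $q_i$ to be any $\ell$ distinct vertices of $S_2$ with $q_\ell = b$; then pick the edges greedily: $q_i$ must be adjacent to both $p_i$ and $p_{i+1}$ (for $i < \ell$), and $q_\ell = b$ must be adjacent to $p_\ell$. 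Using the observation that any two vertices of $S_1$ have more than $\ell$ common neighbors in $S_2$, and selecting the $q_i$ one at a time avoiding the (at most $\ell - 1$) previously used ones, each choice is possible — except the last edge $p_\ell q_\ell = p_\ell b$, which I handle by additionally requiring $p_\ell$ to be chosen from $N_{S_1}(b)$; since $\overline{d_{S_1}}(b) < n - \delta(\overline{G[S_2]}) - 1$, the set $N_{S_1}(b)$ is large (comparable to $|S_1|$, far bigger than $\ell$), so such a choice of $p_\ell \neq a$ exists, and similarly for choosing the $q_i$. This produces the desired $C_m$ in $G$, contradicting our assumption, so $w$ cannot have neighbors in both $S_1$ and $S_2$.

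The main obstacle I anticipate is the endpoint/degree bookkeeping rather than the parity: I must make sure that in the greedy construction there is always enough room to pick the next vertex distinct from all previously used ones on both sides simultaneously, and in particular that the ``common neighborhood exceeds $\ell$'' bound is not eaten up by the at most $\ell-1$ forbidden choices — it is exactly this margin that Claim~3.5 was set up to provide, so the computation should go through, but it needs to be stated cleanly. A secondary point to get right is the degenerate small cases, e.g. $\ell = 2$ ($m = 5$), where the path is very short and one must double-check that distinct vertices $a, p_2, \dots$ and $b, q_1, \dots$ can all be chosen; since $|S_1|, |S_2|$ are of order $n \gg m$ this is immediate, but I would note it explicitly to avoid an off-by-one.
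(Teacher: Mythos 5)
Your proposal is correct and follows essentially the same route as the paper: assume $w$ has neighbors $a\in S_1$ and $b\in S_2$, use the fact that $d_{S_1}(b)>\ell\ge 2$ to anchor the last $S_1$-vertex of the path in $N_{S_1}(b)\setminus\{a\}$, and weave the remaining alternating path via the common-neighborhood observation of Claim~3.5 to close a $C_m$ through $w$. Your cycle is the paper's cycle $w, y, z_1, w_1, \dots, z_\ell, w$ read in the opposite direction, and your endpoint and distinctness bookkeeping matches the margins the paper's Claim~3.5 provides.
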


\begin{proof}
Assume for contradiction that some vertex $w \in U$ is adjacent to $x \in S_1$ and $y \in S_2$. Since $d_{S_1}(y) = |N_{S_1}(y)| > \ell \ge 2$ by Claim 12, $y$ has a neighbor $z \in S_1$ where $z \neq x$. Now let $z_1, z_2, \dots, z_\ell$ be any distinct vertices of $S_1$ such that $z_1 = z$ and $z_\ell = x$. By Claim 12, it follows that there are at least $\ell$ vertices of $S_2$ adjacent to both $z_i$ and $z_{i+1}$ for each $1 \le i \le \ell - 1$. Therefore we may choose distinct vertices $w_1, w_2, \dots, w_{\ell - 1}$ in $S_2$ such that $w_i$ is adjacent to both $z_i$ and $z_{i+1}$ and $w_i \neq y$ for all $1 \le i \le \ell - 1$. Now note that the vertices $w, y, z_1, w_1, z_2, w_2, \dots, z_{\ell - 1}, w_{\ell - 1},  z_\ell$ forms a cycle of length $2 + 2(\ell - 1) + 1 = m$ in $G$, which is a contradiction. This proves the claim.
\end{proof}

By Claim 14, there are sets $U_1$ and $U_2$ such that $U = U_1 \cup U_2$ and no vertex in $U_i$ is adjacent to a vertex of $S_i$ for $i = 1, 2$. It follows that $|S_1 \cup U_1| + |S_2 \cup U_2| \ge |V(G)| = 2n - 1$ and therefore either $|S_1 \cup U_1| \ge n$ or $|S_2 \cup U_2| \ge n$.

First suppose that $|S_1 \cup U_1| \ge n$. Since $T_n$ is a tree, it is bipartite and admits a bipartition $V(T_n) = A \cup B$ where $|A|\ge |B|$ and thus $|A| \ge n/2$. Now consider the following embedding of $T_n$ to $\overline{G[S_1 \cup U_1]}$. If $|S_1| \ge n$ then map the vertices of $T_n$ arbitrarily to distinct vertices in $S_1$. Otherwise, map $n - |S_1|$ vertices in $A$ to distinct vertices in $U_1$ and the remaining $|S_1|$ vertices of $T_n$ to distinct vertices in $S_1$. Note that this is possible since $n - |S_1| \le \sqrt{\frac{1}{2}(m-3)n} \le n/2 \le |A|$ since $n \ge 2m - 6$. Since each vertex in $S_1$ is not adjacent to all other vertices in $S_1 \cup U_1$ and $A$ is an independent set of $T_n$, this is a valid embedding. This contradicts the fact that $\overline{G}$ does not contain $T_n$ as a subgraph. We arrive at a symmetric contradiction when $|S_2 \cup U_2| \ge n$. This proves Theorem 4.

\section{Conclusions and Future Work}

The primary direction for further work is to determine exactly the number $n_0(m)$. Our work and the path-odd cycle result of Faudree, Lawrence, Parsons and Schelp in \cite{faudree1974path} show that $2m/3 - 1 \le n_0(m) \le 25m$ \cite{faudree1974path}. Another possible direction for future work would be to extend the methods shown here to families of sparse graphs other than trees, such as unicyclic graphs.

\section*{Acknowledgements}

This research was conducted at the University of Minnesota Duluth REU and was supported by NSF grant 1358695 and NSA grant H98230-13-1-0273. The author thanks Joe Gallian for suggesting the problem and Levent Alpoge and Joe Gallian for helpful comments on the manuscript. The author also thanks P\'{e}ter Csikv\'{a}ri for suggesting Lemma 7 to improve the bound in Claim 8.

\end{document}